\def\cleardoublepage{\clearpage\if@twoside \ifodd\c@page\else%
         \hbox{}%
     \thispagestyle{empty}
     \newpage%
     \if@twocolumn\hbox{}\newpage\fi\fi\fi}
\theoremstyle{plain}
\newtheorem{theorem}{Theorem}[section]
\newtheorem{definition}[theorem]{Definition}
\newtheorem{lemma}[theorem]{Lemma}
\newtheorem{proposition}[theorem]{Proposition}
\newtheorem{remark}[theorem]{Remark}
\numberwithin{equation}{section}
\theoremstyle{definition}
\newcommand{\R}{\ensuremath{\mathbb{R}}}
\begin{document}

\title[]{On a semilinear parabolic equation\\ with time-dependent source term \\ on infinite graphs}

\author{Fabio Punzo}
\address{\hbox{\parbox{5.7in}{\medskip\noindent{Dipartimento di Matematica,\\
Politecnico di Milano,\\
   Piazza Leonardo da Vinci 32, 20133 Milano, Italy. \\[3pt]
        \em{E-mail address: }{\tt
          fabio.punzo@polimi.it}}}}}

          \author{Alessandro Sacco}
\address{\hbox{\parbox{5.7in}{\medskip\noindent{Dipartimento di Matematica,\\
Politecnico di Milano,\\
   Piazza Leonardo da Vinci 32, 20133 Milano, Italy. \\[3pt]
        \em{E-mail address: }{\tt
alessandro3.sacco@mail.polimi.it}}}}}

\keywords{Semilinear parabolic equations, infinite graphs, blow-up, global existence, heat semigroup, heat kernel}

\subjclass[2020]{35A01, 35A02, 35B44, 35K05, 35K58, 35R02.}

\maketitle

\maketitle

\begin{abstract} We are concerned with semilinear parabolic equations, with a time-dependent source term of the form $h(t)u^q$ with $q>1$, posed on an infinite graph. We assume that the bottom of the $L^2$-spectrum of the Laplacian on the graph, denoted by $\lambda_1(G)$, is positive. In dependence of $q, h(t)$ and $\lambda_1(G)$, we show global in time existence or finite time blow-up of solutions.
\end{abstract}

\bigskip
\bigskip

\section{Introduction}
We investigate the global existence and finite-time blow-up of solutions to parabolic Cauchy problems of the form:
\begin{equation}\label{problema}
\begin{cases}
u_t= \Delta u +h(t)u^q & \text{in } G\times (0,T) \\
u =u_0&\text{in } G\times {0},
\end{cases}
\end{equation} where $(G, \omega, \mu)$ is an {\it infinite weighted} graph with {\it edge-weight} $\omega$ and {\it vertex measure} $\mu$, $\Delta$ denotes the Laplace operator on $G$, $T \in (0,\infty]$, and $u_0: G \to \mathbb{R}$ is a given nonnegative initial datum.

The existence and nonexistence of global solutions for problem \eqref{problema} in the Euclidean setting, particularly when $h \equiv 1$, have been extensively studied in the literature (see, e.g., \cite{F}, \cite{H}, \cite{KST}, \cite{BB}, \cite{DL}, \cite{Levine}, and references therein). When the underlying domain is a Riemannian manifold instead of $\mathbb{R}^N$, different behaviors emerge, as shown in \cite{BPT, MeGP1, MeGP2, MeGP3, MeGP4, GMPu, Sun1, MaMoPu, Punzo, Pu22, Zhang}. Notably, the specific case of problem \eqref{problema} on the hyperbolic space was analyzed in \cite{BPT}.

In recent years, there has been growing interest in the study of elliptic and parabolic equations on graphs. Important contributions in this direction include the monographs \cite{Grig2, KLW, Mu2} and the works \cite{AS1, BMP, HJ, HK, MP2, MoPuSo} for elliptic equations, as well as \cite{BCG, BP, F, GT, HMu, Huang, HuangKS, LSZ, LW2, GMe, MoPuSo2, Mu2, PuSv, PuTe, PuTe2, PuTe3, Wu} for parabolic ones. In particular, several results concerning the blow-up and global existence of solutions to problem \eqref{problema},  with $h \equiv 1$ and a general source term $f(u)$, have been established in \cite{GMeP5, LSZ, LW2, LWu}.

In this paper, we extend some of the results obtained in \cite{BPT} for the hyperbolic space to the setting of graphs. As a model case, we consider $h(t) = e^{\alpha t}$. Let $\lambda_1(G)$ denote the bottom of the $L^2$-spectrum of $\Delta$ on $G$. We prove that any nontrivial solution to problem \eqref{problema} blows up in finite time whenever $\alpha > (q-1)\lambda_1(G)$. Conversely, if $\alpha < (q-1)\lambda_1(G)$, a global solution exists provided that the initial datum $u_0$ is sufficiently small. The critical case $\alpha = (q-1)\lambda_1(G)$ remains an open question. For the hyperbolic space, this case has been investigated in \cite{BPT, WY} using precise heat semigroup estimates and properties of solutions to an associated elliptic equation.

Furthermore, when $h \equiv 1$, we show that for any $q > 1$, problem \eqref{problema} admits a global solution if $u_0$ is small enough, which aligns with the results in \cite{GMeP5}.

It is worth noting that our proof of the blow-up result follows the same strategy as in \cite{BPT} (see also \cite{GMeP5}), suitably adapted to our setting. However, the proofs of local and global existence of solutions differ substantially. In \cite{BPT}, existence is established via the method of sub- and supersolutions, which we are unable to construct in our framework. Instead, our approach relies on a fixed-point argument. A similar strategy was employed in \cite{GMeP5}, but due to the presence of the function $h(t)$, we utilize a different metric space to apply the contraction principle, therefore, some different estimates are in order. Notably, our method is quite general and could also provide an alternative proof for the results in \cite{BPT, PuTe}, where the same equation is studied on hyperbolic spaces and {\it metric} graphs.

The structure of the paper is as follows. In Section \ref{prel}, we recall fundamental concepts related to graphs and the heat semigroup on a graph. The notion of solution we consider, along with our main results, is presented in Section \ref{statements}. The finite-time blow-up result is established in Section \ref{blowup}, while local and global existence are addressed in Sections \ref{local-existence} and \ref{existence}, respectively.

\section{Mathematical background}\label{prel}

Let $G$ be a countable infinite set, and
let $\mu:G\to (0,+\infty)$ be a measure on $G$
satisfying $\mu(\{x\}) <+\infty$ for every $x\in G$ (so that $(G,\mu)$ becomes a measure space). Furthermore, let
\begin{equation*}
\omega:G\times G\to [0,+\infty)
\end{equation*}
be a symmetric, with zero diagonal and finite sum function, i.e.
\begin{equation}\label{omega}
\begin{aligned}
&\text{(i)}\,\, \omega(x,y)=\omega(y,x)\quad &\text{for all}\,\,\, (x,y)\in G\times G;\\
&\text{(ii)}\,\, \omega(x,x)=0 \quad\quad\quad\,\, &\text{for all}\,\,\, x\in G;\\
&\text{(iii)}\,\, \displaystyle \sum_{y\in G} \omega(x,y)<\infty \quad &\text{for all}\,\,\, x\in G\,.
\end{aligned}
\end{equation}
Thus, we define  \textit{weighted graph} the triplet $(G,\omega,\mu)$, where $\omega$ and $\mu$ are the so called \textit{edge weight} and \textit{node measure}, respectively. Observe that assumption $(ii)$ corresponds to require that $G$ has no loops.
\smallskip

\noindent Let $x,y$ be two points in $G$; we say that
\begin{itemize}
\item $x$ is {\it connected} to $y$, and we write $x\sim y$, whenever $\omega(x,y)>0$;
\item the couple $(x,y)$ is an {\it edge} of the graph and the vertices $x,y$ are called the {\it endpoints} of the edge whenever $x\sim y$;
\item a collection of vertices $ \{x_k\}_{k=0}^n\subset G$ is a {\it path} if $x_k\sim x_{k+1}$ for all $k=0, \ldots, n-1.$
\end{itemize}

\noindent We say that the weighted graph $(G,\omega,\mu)$ is
\begin{itemize}
\item[(i)] {\em locally finite} if each vertex $x\in G$ has only finitely many $y\in G$ such that $x\sim y$;
\item[(ii)] {\em connected} if, for any two distinct vertices $x,y\in G$ there exists a path joining $x$ to $y$;
\item[(iii)] {\em undirected} if its edges do not have an orientation.
\end{itemize}
We shall always assume in the rest of the paper that the previous properties are fulfilled.

\medskip

Let $\mathfrak F$ denote the set of all functions $f: G\to \mathbb R$. For any $f\in \mathfrak F$ and for all $x,y\in G$, let us give the following
\begin{definition}\label{def1}
Let $(G, \omega,\mu)$ be a weighted graph. For any $f\in \mathfrak F$:
\begin{itemize}
\item the {\em difference operator} is
\begin{equation}\label{e2f}
\nabla_{xy} f:= f(y)-f(x)\,;
\end{equation}
\item the {\em (weighted) Laplace operator} on $(G, \omega, \mu)$ is
\begin{equation*}
\Delta f(x):=\frac{1}{\mu(x)}\sum_{y\in G}[f(y)-f(x)]\omega(x,y)\quad \text{ for all }\, x\in G\,.
\end{equation*}
\end{itemize}
\end{definition}
\noindent Clearly,
\[\Delta f(x)=\frac 1{\mu(x)}\sum_{y\in G}(\nabla_{xy} f)\omega(x,y)\quad \text{ for all } x\in G\,.\]

We set
$$\ell^\infty(G):=\{u\in \mathfrak F\,:\, \sup_{x\in G}|u(x)| <+\infty\}.
$$

\subsection{The heat semigroup on $G$}\label{heats}
We need to recall some preliminaries concerning the heat semigroup on $G$. Let $(G,\omega,\mu)$ be a weighted infinite graph.
Let $\{e^{t\Delta}\}_{t\ge 0}$ be the heat semigroup of $G$. It admits a (minimal) \textit{heat kernel}, namely a function $p:G\times G\times (0,+\infty)\to\R$, $p>0$ in $G\times G\times (0,+\infty)$ such that
\begin{equation*}
(e^{t\Delta} u_0)(x)=\sum_{y\in G}p(x,y,t)\,u_0(y)\,\mu(y), \quad x\in G,\,\, t>0,
\end{equation*}
for any $u_0\in \ell^\infty(G)$. It is well known that
\begin{equation}\label{eq23}
\sum_{y\in G} p(x,y,t)\,\mu(y)\,\le \,1,\quad \text{for all}\,\,\, x\in G,\,\, t>0.
\end{equation}
We say that a graph $G$ is {\it stochastically} complete if the following condition holds:
\begin{equation*}
\sum_{y\in G} p(x,y,t) \,\mu(y)\,= \,1,\quad \text{for all}\,\,\, x\in G,\,\, t>0.
\end{equation*}
The main properties of $p(x,y,t)$ are stated below.
\begin{proposition}\label{prop1}
Let $(G,\omega,\mu)$ be a weighted infinite graph. Then the corresponding heat kernel satisfies the following properties:
\begin{itemize}
\item symmetry: $p(x,y,t)\equiv p(y,x,t)$ for all $x,y\in G$ and $t>0$.
\item $p(x,y,t)\ge 0$ for all $x,y\in G$ and $t>0$ and
\begin{equation}\label{eq23}
\sum_{y\in G} p(x,y,t) \,\mu(y)\,\le \,1,\quad \text{for all}\,\,\, x\in G,\,\, t>0.
\end{equation}
\item semigroup identity: for all $x,y\in G$ and $t,s>0$,
\begin{equation}\label{eq24}
p(x,y,t+s)=\sum_{z\in G} p(x,z,t)\, p(z,y,s) \,\mu(z).
\end{equation}
\end{itemize}
\end{proposition}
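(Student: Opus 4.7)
The plan is to prove all three properties by constructing the minimal heat kernel as a monotone limit of heat kernels on finite subgraphs, where every statement reduces to a finite-dimensional matrix computation. Fix an exhaustion $\Omega_1 \subset \Omega_2 \subset \cdots$ of $G$ by finite connected subsets with $\bigcup_n \Omega_n = G$. On each $\Omega_n$ the Dirichlet Laplacian $\Delta_n$ acts on the finite-dimensional space of functions supported in $\Omega_n$, so $e^{t\Delta_n}$ is a well-defined matrix exponential whose kernel $p_n(x,y,t)$ is characterized by $(e^{t\Delta_n}f)(x) = \sum_{y \in \Omega_n} p_n(x,y,t)\, f(y)\, \mu(y)$. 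A parabolic comparison argument on finite graphs gives $p_n \le p_{n+1}$, so one can set $p(x,y,t) := \lim_{n\to\infty} p_n(x,y,t)$, which coincides with the minimal heat kernel of $(G, \omega, \mu)$ used in Section \ref{heats}.

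Each of the three properties then follows by verifying it for the approximations and passing to the limit. Symmetry is immediate from the self-adjointness of $\Delta_n$ with respect to the inner product $\langle f, g\rangle_\mu = \sum_{x} f(x) g(x)\mu(x)$, itself a direct consequence of property (i) in \eqref{omega}; this transfers to $p$ in the limit. Non-negativity $p_n \ge 0$ is the parabolic maximum principle on the finite set $\Omega_n$, which here reduces to the fact that the off-diagonal entries of the matrix representing $\mu(x)\Delta_n$ are non-negative, so $e^{t\Delta_n}$ preserves the positive cone. For the mass bound, observe that the constant function $1$ on $G$ is a supersolution of the homogeneous Dirichlet heat equation on $\Omega_n$: comparison therefore yields $(e^{t\Delta_n}\mathbf{1}_{\Omega_n})(x) \le 1$, which is exactly $\sum_{y\in \Omega_n} p_n(x,y,t)\mu(y) \le 1$, and monotone convergence produces \eqref{eq23}.

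The semigroup identity reduces to the matrix identity $e^{(t+s)\Delta_n} = e^{t\Delta_n}\,e^{s\Delta_n}$, which in kernel form reads
\[
p_n(x, y, t+s) = \sum_{z \in \Omega_n} p_n(x, z, t)\, p_n(z, y, s)\, \mu(z).
\]
Since $p_n \uparrow p$ pointwise and all terms are non-negative, monotone convergence transfers the identity to $p$, giving \eqref{eq24}.

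The most delicate step in this plan is establishing the monotonicity $p_n \le p_{n+1}$ and identifying the pointwise limit with the kernel of the full heat semigroup on $\ell^\infty(G)$ introduced in Section \ref{heats}. This requires a comparison argument between the solutions to the Dirichlet parabolic problems on $\Omega_n$ and on $\Omega_{n+1}$ with initial data $\delta_y/\mu(y)$, together with a consistency check against the semigroup $\{e^{t\Delta}\}_{t\ge 0}$ itself. Once these identifications are in place, all three stated properties are inherited from their finite-dimensional counterparts by routine limits.
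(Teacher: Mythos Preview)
The paper does not actually prove Proposition~\ref{prop1}; it is recorded in Section~\ref{heats} among the preliminaries as a standard fact about the minimal heat kernel on a weighted graph, with the monographs \cite{KLW, Grig2} serving as implicit references. So there is no ``paper's own proof'' to compare against.

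That said, your argument is the classical one and is correct. Building $p$ as the increasing limit of Dirichlet heat kernels $p_n$ on a finite exhaustion is exactly how the minimal heat kernel is constructed in the references above, and each of the three properties transfers as you describe: symmetry from the $\mu$-self-adjointness of $\Delta_n$, non-negativity and the sub-Markov bound from the parabolic maximum principle on finite sets, and the Chapman--Kolmogorov identity from the matrix law $e^{(t+s)\Delta_n}=e^{t\Delta_n}e^{s\Delta_n}$ together with monotone convergence (after extending $p_n$ by zero off $\Omega_n$, so that the products $p_n(x,z,t)p_n(z,y,s)$ increase to $p(x,z,t)p(z,y,s)$). Your closing caveat is exactly the right one: the identification of $\lim_n p_n$ with the kernel of the semigroup $\{e^{t\Delta}\}_{t\ge0}$ used in Section~\ref{heats} is where minimality is invoked, and that is precisely what the cited references supply.
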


The next result is contained in \cite[Proposition 4.5]{KLW}.
\begin{proposition}\label{prop2}
Let $(G,\omega,\mu)$ be a weighted infinite graph. For all vertices $x$ and $y$
\begin{equation}\label{eq26}
\lim_{t\to+\infty} \frac{\log p(x,y,t)}{t}\,=\, -\lambda_1(G)
\end{equation}
where $\lambda_1(G)$ is the infimum of the spectrum of the operator $-\Delta$.
\end{proposition}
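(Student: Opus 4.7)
The plan is to establish matching upper and lower bounds on $\log p(x,y,t)/t$. Both rest on the spectral theorem applied to $-\Delta$ as a self-adjoint, nonnegative operator on $\ell^2(G,\mu)$, combined with the symmetry and semigroup identity \eqref{eq24} of the heat kernel to transfer bounds between diagonal and off-diagonal values and between different pairs of vertices.

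For the upper bound I would identify $p(x,y,t)$ as a matrix element of the semigroup. Writing $\mathbf{1}_x$ for the indicator of $\{x\}$ and using the inner product $\langle f,g\rangle=\sum_{z\in G}f(z)g(z)\mu(z)$, one has $p(x,y,t)\,\mu(x)\mu(y)=\langle e^{t\Delta}\mathbf{1}_y,\mathbf{1}_x\rangle$ and $\|\mathbf{1}_x\|=\sqrt{\mu(x)}$. Since $\inf\sigma(-\Delta)=\lambda_1(G)$, the spectral theorem gives the operator bound $\|e^{t\Delta}\|_{\ell^2\to\ell^2}=e^{-\lambda_1(G)t}$, so Cauchy--Schwarz yields
\[
p(x,y,t)\;\leq\;\frac{e^{-\lambda_1(G)t}}{\sqrt{\mu(x)\mu(y)}},
\]
which suffices for $\limsup_{t\to\infty}\log p(x,y,t)/t\leq -\lambda_1(G)$.

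The lower bound is more delicate. I would first reduce to a single diagonal value: applying \eqref{eq24} twice to an arbitrary pair $(x,y)$ with fixed $s_1,s_2>0$ and an auxiliary vertex $x_0$,
\[
p(x,y,t+s_1+s_2)\;\geq\;p(x,x_0,s_1)\,p(x_0,x_0,t)\,p(x_0,y,s_2)\,\mu(x_0)^2,
\]
and strict positivity of $p$ (noted in the excerpt) implies $\liminf_{t\to\infty}\log p(x,y,t)/t\geq\liminf_{t\to\infty}\log p(x_0,x_0,t)/t$ for every $x_0$. It therefore suffices, for each $\varepsilon>0$, to exhibit some vertex $x_0=x_0(\varepsilon)$ for which the diagonal liminf is at least $-\lambda_1(G)-\varepsilon$. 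The spectral theorem gives
\[
p(x_0,x_0,t)\,\mu(x_0)^2\;=\;\int_{\lambda_1(G)}^{\infty}e^{-\lambda t}\,d\nu_{x_0}(\lambda),
\]
with $\nu_{x_0}$ the spectral measure of $\mathbf{1}_{x_0}$ for $-\Delta$; if $\nu_{x_0}\bigl([\lambda_1(G),\lambda_1(G)+\varepsilon)\bigr)>0$, then bounding the integrand from below by $e^{-(\lambda_1(G)+\varepsilon)t}$ on this interval yields the required estimate, and $\varepsilon\to 0$ finishes the proof.

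The main obstacle is producing, for each $\varepsilon>0$, a vertex $x_0(\varepsilon)$ with the needed spectral support property. Because $\lambda_1(G)\in\sigma(-\Delta)$, the orthogonal projection $P_\varepsilon:=\chi_{[\lambda_1(G),\lambda_1(G)+\varepsilon)}(-\Delta)$ is nonzero, so some $g\in\ell^2(G,\mu)\setminus\{0\}$ lies in its range; since finitely supported functions span $\ell^2(G,\mu)$, we cannot have $g\equiv 0$ pointwise, so $g(y)\neq 0$ for some $y\in G$. Then $g(y)\mu(y)=\langle g,\mathbf{1}_y\rangle=\langle P_\varepsilon g,\mathbf{1}_y\rangle=\langle g,P_\varepsilon\mathbf{1}_y\rangle\neq 0$, which forces $P_\varepsilon\mathbf{1}_y\neq 0$ and hence $\nu_y([\lambda_1(G),\lambda_1(G)+\varepsilon))=\|P_\varepsilon\mathbf{1}_y\|^2/\mu(y)^2>0$. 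Taking $x_0(\varepsilon):=y$ closes the argument, and the limit is the same at every pair $(x,y)$ thanks to the vertex-independence step.
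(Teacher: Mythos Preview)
Your argument is correct. The upper bound via the operator norm $\|e^{t\Delta}\|_{\ell^2\to\ell^2}=e^{-\lambda_1(G)t}$ and Cauchy--Schwarz is standard, and the lower bound is handled well: the semigroup identity reduces the problem to a single diagonal entry, and your construction of $x_0(\varepsilon)$ via the nonzero spectral projection $P_\varepsilon$ is a clean way to guarantee that the spectral measure $\nu_{x_0}$ charges $[\lambda_1(G),\lambda_1(G)+\varepsilon)$. (A minor slip: with your convention $p(x_0,x_0,t)\mu(x_0)^2=\int e^{-\lambda t}\,d\nu_{x_0}$ one has $\nu_{x_0}(A)=\|E(A)\mathbf{1}_{x_0}\|^2$, without the extra $\mu(y)^{-2}$ factor you wrote; this does not affect the positivity argument.)

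As for the comparison with the paper: there is nothing to compare. The paper does not prove Proposition~\ref{prop2} at all; it merely quotes the result from \cite[Proposition~4.5]{KLW}. Your proof is essentially the classical spectral-theoretic argument for the large-time exponential rate of heat kernels (going back to results of Li and of Carmona--Simon in the manifold/Schr\"odinger setting), and it is in the same spirit as the proof one finds in \cite{KLW}. So you have supplied a self-contained justification where the paper relied on an external reference.
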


Moreover, by combining together \cite[Theorems 2.1, 2.2]{F}, it is also possible to state the following result:
\begin{proposition}\label{prop2bis}
Let $(G,\omega,\mu)$ be a weighted infinite graph. For any $\underline t>0$ there exists $\underline C>0$ such that
\begin{equation}\label{e301}
p(x,y,t)\leq \underline C\, e^{-\lambda_1(G) t} \quad \text{for all}\,\,\,x,y\in G, t\geq \underline t\,,
\end{equation}
where $\lambda_1(G)$ is the infimum of the spectrum of the operator $-\Delta$.

\end{proposition}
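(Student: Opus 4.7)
The plan is a spectral–semigroup argument resting on what has already been recalled in Propositions \ref{prop1} and \ref{prop2}. Since $-\Delta$ is a nonnegative self-adjoint operator on $\ell^{2}(G,\mu)$ whose spectrum is contained in $[\lambda_1(G),+\infty)$, functional calculus delivers the contraction estimate
\[
\|e^{t\Delta}\|_{\ell^{2}(G,\mu)\to\ell^{2}(G,\mu)} \,\le\, e^{-\lambda_1(G)\,t}, \qquad t\ge 0,
\]
which will drive the whole argument. The goal is to convert this operator-norm bound into a pointwise estimate on the kernel.

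First, I would obtain the on-diagonal bound. Writing $\delta_{x}$ for the indicator of the vertex $x$, Definition \ref{def1} and \eqref{eq24} give $(e^{t\Delta}\delta_{x})(y)=p(y,x,t)\,\mu(x)$; testing against $\delta_{x}$ itself and using the semigroup identity together with the symmetry of Proposition \ref{prop1} to compute
\[
\|e^{t\Delta}\delta_{x}\|_{\ell^{2}(G,\mu)}^{2} \,=\, \mu(x)^{2}\,p(x,x,2t), \qquad \|\delta_{x}\|_{\ell^{2}(G,\mu)}^{2} \,=\, \mu(x),
\]
one immediately deduces $p(x,x,t)\le e^{-\lambda_1(G) t}/\mu(x)$ for every $t>0$.

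Next, I would transfer this decay to off-diagonal pairs via Cauchy--Schwarz applied to \eqref{eq24}. From
\[
p(x,y,2t) \,=\, \sum_{z\in G} p(x,z,t)\,p(z,y,t)\,\mu(z) \,\le\, \Big(\sum_{z}p(x,z,t)^{2}\mu(z)\Big)^{1/2}\Big(\sum_{z}p(y,z,t)^{2}\mu(z)\Big)^{1/2},
\]
and the identity $\sum_{z}p(x,z,t)^{2}\mu(z)=p(x,x,2t)$ (one more use of \eqref{eq24} and symmetry), the bound $p(x,y,t)\le \sqrt{p(x,x,t)\,p(y,y,t)}$ follows, whence $p(x,y,t)\le e^{-\lambda_1(G) t}/\sqrt{\mu(x)\mu(y)}$ for every $t>0$.

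The hard part will be the final step: replacing the $\mu$-dependent factor $1/\sqrt{\mu(x)\mu(y)}$ by a single constant $\underline C=\underline C(\underline t)$. The idea is to split $t=\underline t+(t-\underline t)$ inside \eqref{eq24} and apply the spectral bound only to the semigroup factor of length $t-\underline t$, while controlling $p(\cdot,\cdot,\underline t)$ through the uniform-in-vertex pointwise estimate supplied by \cite[Theorems 2.1, 2.2]{F}. Equivalently, the spectral resolution gives $p(x,x,t)\mu(x)^{2}=\int_{\lambda_1(G)}^{\infty}e^{-t\lambda}\,d\nu_{x}(\lambda)$, so the factor $e^{-\lambda_1(G)(t-\underline t)}$ can be extracted because $\lambda\ge\lambda_1(G)$ on the support of the spectral measure $\nu_{x}$, leaving only $p(x,x,\underline t)$ to be bounded by the cited Folz-type estimate. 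Combining with the Cauchy--Schwarz inequality from the previous paragraph then yields \eqref{e301}; the crux is precisely that, without the quantitative input from \cite{F}, one would be stuck with the $\mu$-dependent bound and not the uniform constant appearing in the statement.
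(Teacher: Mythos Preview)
The paper does not actually prove Proposition~\ref{prop2bis}; it merely records that the bound follows ``by combining together \cite[Theorems 2.1, 2.2]{F}''. Your proposal is therefore strictly more detailed than the paper's treatment, and the spectral--semigroup part is correct: the $\ell^{2}$--operator bound $\|e^{t\Delta}\|\le e^{-\lambda_1(G)t}$, together with $\|e^{t\Delta}\delta_x\|_{\ell^2}^2=\mu(x)^2 p(x,x,2t)$ and Cauchy--Schwarz through \eqref{eq24}, does give $p(x,y,t)\le e^{-\lambda_1(G)t}/\sqrt{\mu(x)\mu(y)}$, and the splitting $t=\underline t+(t-\underline t)$ via the spectral measure cleanly isolates the factor $e^{-\lambda_1(G)(t-\underline t)}$ while leaving a fixed-time kernel to be bounded.

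What your write-up and the paper have in common is that the genuinely nontrivial input---a \emph{vertex-uniform} bound $\sup_{x,y\in G}p(x,y,\underline t)<\infty$---is outsourced to Folz~\cite{F}. You correctly flag this as ``the crux''. It is worth being explicit that this is not a formality: for a general weighted graph with $\inf_{x}\mu(x)=0$ the bound $p(x,x,t)\le 1/\mu(x)$ coming from spectral theory alone is useless, and one really needs the quantitative (Gaussian-type) estimates of \cite{F} under the standing hypotheses of Section~\ref{prel} to secure a constant independent of $x,y$. Since the paper itself simply defers to \cite{F} for the whole statement, your reduction is a legitimate and somewhat more transparent route to the same conclusion; the only suggestion is to state precisely which hypotheses in \cite{F} guarantee the uniform fixed-time bound, since that is where all the content lies.
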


Finally, let us recall the \textit{positivity improving} property of the heat semigroup (see, e.g., \cite[Theorem 1.26]{KLW}).
\begin{proposition}
    \label{prop:positivity-improving}
    Let $(G,\omega,\mu)$ be a weighted infinite graph, then
  \begin{equation*}
       \sum_{y\in G}\,p(x, y, t)u_0(y)\mu(y)>0\quad\text{for all } x\in G,\,t>0\,,
    \end{equation*}
   for any $u_0\in\ell^\infty$ such that $u_0\geq 0$ in $G$ and $u_0\not\equiv 0$.
\end{proposition}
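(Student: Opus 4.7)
The claim reduces immediately to the strict positivity of the heat kernel, which has already been recorded in this subsection: $p(x,y,t)>0$ for every $x,y\in G$ and $t>0$. Given that, fix $x\in G$ and $t>0$. Since $u_0\geq 0$ and $u_0\not\equiv 0$, I can pick a vertex $y_0\in G$ with $u_0(y_0)>0$. Every term of the series $\sum_{y\in G} p(x,y,t)\,u_0(y)\,\mu(y)$ is nonnegative (because $p>0$, $\mu>0$, and $u_0\geq 0$), so dropping all but one term yields
\[
\sum_{y\in G} p(x,y,t)\,u_0(y)\,\mu(y)\;\geq\; p(x,y_0,t)\,u_0(y_0)\,\mu(y_0)\;>\;0,
\]
which is exactly the desired conclusion. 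Note that the series converges absolutely by \eqref{eq23} and the assumption $u_0\in\ell^\infty(G)$, so the manipulations above are legitimate.

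\textbf{On the only nontrivial input.} The single ingredient that is not formally trivial is the positivity $p>0$, which the paper quotes from \cite[Theorem~1.26]{KLW}. If I had to supply a self-contained argument, I would use connectedness: given $x,y_0\in G$, let $x=x_0\sim x_1\sim\cdots\sim x_n=y_0$ be a path, iterate the semigroup identity \eqref{eq24} $n-1$ times to obtain
\[
p(x,y_0,t)\;\geq\; p(x,x_1,t/n)\,p(x_1,x_2,t/n)\cdots p(x_{n-1},y_0,t/n)\,\mu(x_1)\cdots\mu(x_{n-1}),
\]
and then reduce to showing $p(u,v,s)>0$ whenever $u\sim v$ and $s>0$ is small. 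This last step follows from the short-time expansion $p(u,v,s)\,\mu(v)=s\,\omega(u,v)/\mu(u)+o(s)$ coming from differentiating $e^{s\Delta}$ at $s=0$, combined with $\omega(u,v)>0$. Overall I do not expect any real obstacle: the proposition is essentially a direct consequence of $p>0$ and the standing hypothesis $\mu>0$.
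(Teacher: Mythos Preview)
Your argument is correct. Note, however, that the paper does not give its own proof of this proposition: it simply records it as the positivity-improving property of the heat semigroup and refers to \cite[Theorem~1.26]{KLW}. Your first paragraph is in fact a complete and legitimate proof, since the strict positivity $p(x,y,t)>0$ is already asserted earlier in the same subsection; from that point on the conclusion is an immediate consequence, exactly as you wrote. Your second paragraph (the sketch of why $p>0$ via connectedness and the semigroup identity) is not needed here, but it correctly identifies where the actual content lies and matches the standard argument behind the cited result.
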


\section{Statements of main results}\label{statements}
Solutions to problem \eqref{problema} are meant in the {\it mild} sense, according to the next definition. Here and hereafter $p:G\times G\times (0, +\infty)\to (0, +\infty)$ stands again for the heat kernel on $G$ discussed in the previous section.
\begin{definition}\label{defsol}
A function $u:G\times(0,\tau)\to\R$, $u\in L^\infty((0,\tau),\ell^\infty(G))$ is a \textit{mild} solution of problem \eqref{problema} if
\begin{equation}\label{solmild}
u(x,t)=\sum_{y\in G}p(x,y,t)u_0(y)+\int_0^t\sum_{y\in G}\,p(x,y,t-s)\,h(s)\,u(y,s)^q\,ds\,,
\end{equation}
for every $x\in G$ and $t\in(0,\tau)$. If $u$ is a mild solution of \eqref{problema} in $G\times(0,+\infty)$ then we call it a global solution.
\end{definition}

We say that a solution {\em blows up in finite time}, whenever there exists $\tau>0$ such that
$$\lim_{t\to \tau^-}\|u(t)\|_{\ell^\infty(G)}=+\infty\,.$$

\medskip

Our first result is concerned with nonexistence of global solutions.

\begin{theorem}\label{teo1}
Let $(G,\omega,\mu)$ be a weighted, stochastically complete, infinite graph with $\lambda_1(G)>0$. Let $u_0\in\ell^\infty(G)$, $u_0\ge 0$, $u_0\not\equiv 0$ in $G$. Let $h$ be a non-negative, locally integrable function in $[0, +\infty)$ and define $H:[0,+\infty)\rightarrow[0, +\infty)$ as
\begin{equation*}
    H(t):=\int_0^t\,h(s)\,ds\,.
\end{equation*}
Suppose that, for some $\epsilon\in(0, \lambda_1(G))$,
\begin{equation}\label{eq:limite-non-esistenza}
\lim_{t\rightarrow+\infty}\,\frac{[H(t)]^{\frac{1}{q-1}}}{e^{[\lambda_1(G)+\epsilon]t}}
=+\infty\,.
\end{equation}
Then any solution to problem \eqref{problema} blows up in finite time.
\end{theorem}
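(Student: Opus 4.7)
The plan is to reduce the blow-up question for the mild solution to a scalar integral inequality of Gr\"onwall type, and then to invoke the heat-kernel asymptotic $\log p(x,y,t)/t\to-\lambda_1(G)$ from Proposition~\ref{prop2} to guarantee that the resulting blow-up criterion is eventually met under hypothesis~\eqref{eq:limite-non-esistenza}.

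Fix a point $x_0\in G$ and a parameter $T>0$, and consider
\[
J(t):=\sum_{y\in G}p(x_0,y,T-t)\,u(y,t)\,\mu(y),\qquad t\in[0,T).
\]
Multiplying the mild formula \eqref{solmild} by $p(x_0,x,T-t)\mu(x)$, summing over $x\in G$, and using the semigroup identity \eqref{eq24} twice (together with Fubini) one obtains
\[
J(t)=J(0)+\int_0^t h(s)\sum_{y\in G}p(x_0,y,T-s)\,u(y,s)^q\,\mu(y)\,ds,
\]
where $J(0)=(e^{T\Delta}u_0)(x_0)$. Stochastic completeness makes $p(x_0,\cdot,T-s)\,\mu(\cdot)$ a probability measure on $G$, so Jensen's inequality applied to the convex map $r\mapsto r^q$ (with $u\ge 0$, a consequence of $u_0\ge 0$ and positivity of the kernel) upgrades the identity to
\[
J(t)\ge J(0)+\int_0^t h(s)\,J(s)^q\,ds,\qquad t\in[0,T).
\]

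Setting $\phi(t):=J(0)+\int_0^t h(s)J(s)^q\,ds\le J(t)$, one has $\phi'\ge h\phi^q$, which integrates to
\[
\phi(t)^{-(q-1)}\le J(0)^{-(q-1)}-(q-1)\,H(t).
\]
Since $J(t)\le\|u(\cdot,t)\|_{\ell^\infty(G)}$ by \eqref{eq23}, it follows that $u$ must blow up at some time $t^{\ast}\le T$ whenever
\[
(q-1)\,J(0)^{q-1}\,H(T)\,>\,1.
\]
To produce such a $T$, pick any $y_0\in G$ with $u_0(y_0)>0$; Proposition~\ref{prop2} provides, for every $\delta>0$, a threshold $T_\delta$ with $p(x_0,y_0,T)\ge e^{-(\lambda_1(G)+\delta)T}$ whenever $T\ge T_\delta$, whence
\[
J(0)\ge u_0(y_0)\,\mu(y_0)\,e^{-(\lambda_1(G)+\delta)T}.
\]
The blow-up criterion above is therefore implied by $H(T)^{1/(q-1)}>C_{q,u_0}\,e^{(\lambda_1(G)+\delta)T}$.

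To close, fix $\delta\in(0,\epsilon)$ and write
\[
\frac{H(T)^{1/(q-1)}}{e^{(\lambda_1(G)+\delta)T}}\,=\,e^{(\epsilon-\delta)T}\cdot\frac{H(T)^{1/(q-1)}}{e^{(\lambda_1(G)+\epsilon)T}}.
\]
Hypothesis~\eqref{eq:limite-non-esistenza} forces this ratio to diverge, so the criterion is satisfied for all $T$ large enough and blow-up occurs in finite time. The two delicate points are that the Jensen step needs a genuine probability measure (hence the stochastic completeness assumption), and that the exponential lower bound for $J(0)$ must track the rate $\lambda_1(G)$ sharply: the strictly positive gap $\epsilon$ built into \eqref{eq:limite-non-esistenza} is exactly what absorbs the $\delta$-error inherent in the spectral asymptotic of Proposition~\ref{prop2}.
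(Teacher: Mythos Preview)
Your argument is correct and follows essentially the same route as the paper: the functional $J(t)$ is precisely the paper's $\Phi_{x_0}^T(t)$, your integral identity and Jensen step reproduce Lemma~\ref{lemma2}, and your lower bound on $J(0)$ via Proposition~\ref{prop2} is the content of Lemma~\ref{lemma1}. The only cosmetic differences are that the paper phrases the conclusion as a contradiction (assume global existence, derive the bound $[\Phi^T_{x_0}(0)]^{q-1}H(T)\le 1/(q-1)$ for all $T$, contradict \eqref{eq:limite-non-esistenza}) rather than as a blow-up criterion, and that you introduce an auxiliary $\delta<\epsilon$ where taking $\delta=\epsilon$ directly would already suffice.
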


Concerning the local existence of solutions for the problem \eqref{problema}, we obtain the following result.

\begin{theorem}\label{locexi}
    Let $(G,\omega,\mu)$ be an infinite weighted graph. Let $h:[0, +\infty)\to [0, +\infty)$ be a locally integrable function, and let $q>1$. Finally, let $u_0\in\mathfrak{F}$ be any nonnegative and bounded function. Then there exists a solution $u\in L^{\infty}((0, T); \ell^{\infty}(G))$ to problem \eqref{problema}, provided that $T>0$ is sufficiently small.
    \label{theo:local-existence}
\end{theorem}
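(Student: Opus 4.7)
The plan is to exhibit the solution as a fixed point of the nonlinear integral operator arising from the mild formulation \eqref{solmild}, via the Banach contraction principle on a small time interval. The only graph-specific ingredient is the substochastic bound \eqref{eq23} of Proposition \ref{prop1}; no deeper property of the heat kernel or structural assumption on $G$ will be needed.

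Set $R := 2\|u_0\|_{\ell^\infty(G)}$ and, for $T > 0$ to be determined, consider the complete metric space
\begin{equation*}
X_T := \left\{ v \in L^\infty((0,T); \ell^\infty(G)) : 0 \le v(x,t) \le R \text{ for a.e. } (x,t) \right\},
\end{equation*}
endowed with the distance inherited from $L^\infty((0,T); \ell^\infty(G))$. On $X_T$ I would define the operator
\begin{equation*}
\Phi(v)(x,t) := \sum_{y \in G} p(x,y,t) u_0(y) \mu(y) + \int_0^t h(s) \sum_{y \in G} p(x,y,t-s)\, v(y,s)^q\, \mu(y) \, ds,
\end{equation*}
so that a mild solution of \eqref{problema} on $(0,T)$ is precisely a fixed point of $\Phi$.

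The next step is to verify the two conditions of Banach's theorem. For the invariance $\Phi(X_T) \subseteq X_T$, the substochastic bound \eqref{eq23} together with the nonnegativity of $p$, $h$, $u_0$ and $v$ yields
\begin{equation*}
0 \le \Phi(v)(x,t) \le \|u_0\|_{\ell^\infty(G)} + R^q \int_0^T h(s)\, ds,
\end{equation*}
so one chooses $T$ small enough that $R^q \int_0^T h(s)\,ds \le \|u_0\|_{\ell^\infty(G)}$, which is possible because $\int_0^T h(s)\,ds \to 0$ as $T \to 0^+$ by local integrability of $h$. For the contraction estimate, the elementary pointwise inequality $|a^q - b^q| \le q(\max\{a,b\})^{q-1}|a-b|$ for $a,b \ge 0$, combined again with \eqref{eq23}, gives
\begin{equation*}
\|\Phi(u) - \Phi(v)\|_{L^\infty((0,T); \ell^\infty(G))} \le q R^{q-1} \!\left(\int_0^T h(s)\,ds\right) \|u - v\|_{L^\infty((0,T); \ell^\infty(G))}.
\end{equation*}
Shrinking $T$ further so that $q R^{q-1} \int_0^T h(s)\,ds < 1$, the map $\Phi$ becomes a strict contraction on $X_T$, and Banach's fixed-point theorem produces the desired solution $u \in L^\infty((0,T); \ell^\infty(G))$.

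There is essentially no serious obstacle to this scheme: the argument reduces to a routine application of the contraction principle once the right ball and ambient space are identified. The only care needed is to choose $T$ small enough that both smallness conditions—$R^q \int_0^T h(s)\,ds \le \|u_0\|_{\ell^\infty(G)}$ and $q R^{q-1} \int_0^T h(s)\,ds < 1$—hold simultaneously, which is automatic from absolute continuity of $s\mapsto \int_0^s h$ at $0$. Notice that stochastic completeness, positivity of $\lambda_1(G)$, and heat-kernel decay play no role here; they are relevant only for the blow-up result of Theorem \ref{teo1} and for the global existence analysis treated later in the paper.
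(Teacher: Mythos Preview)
Your proposal is correct and follows essentially the same route as the paper: a Banach fixed-point argument on a bounded ball in $L^\infty((0,T);\ell^\infty(G))$, using only the substochastic bound \eqref{eq23} to get invariance and the Lipschitz estimate $|a^q-b^q|\le q(\max\{a,b\})^{q-1}|a-b|$ for the contraction, then shrinking $T$ so that both smallness conditions hold. The only cosmetic difference is that the paper takes an arbitrary $M>\|u_0\|_\infty$ for the radius of the ball while you fix $R=2\|u_0\|_\infty$; the two arguments are otherwise identical, and your closing remark that neither stochastic completeness nor $\lambda_1(G)>0$ is needed here matches the paper's own observation.
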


Note that in Theorem \ref{locexi}, the assumption $\lambda_1(G)>0$ has not been used. Furthermore, we prove the following result about global existence.

\begin{theorem}\label{teo2}
Let $(G,\omega,\mu)$ be an infinite weighted graph with $\lambda_1(G)>0$. Let $h:[0, +\infty)\to [0, +\infty)$ be a locally integrable function, and let $q>1$. Assume that
\begin{equation}\label{eq306}
    \tilde{H}:=\int_0^{+\infty}\,h(t)\,e^{-\lambda_1(G)(q-1)t}\,dt<+\infty\,.
\end{equation}
Moreover, assume that $u_0\in\ell^\infty(G)$, and
\begin{equation}\label{eq12a}
0\leq u_0(x)\leq\epsilon\,p(x, y_0,\gamma)\qquad\text{for all }x\in G\,,
\end{equation}
for some $\gamma>0,\,y_0\in G$ and for a sufficiently small $\epsilon>0.$
Then there exists a global solution $u:G\times[0, +\infty)\rightarrow[0, +\infty)$ to problem \eqref{problema}.
In addition, for some $M>0$,
\begin{equation}\label{eq305}
0\le u(x,t)\le M p(x,y_0,t+\gamma) \quad \text{ for all } x\in G, t\geq 0 .
\end{equation}

\end{theorem}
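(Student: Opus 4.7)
The plan is to establish a global mild solution via the Banach fixed point theorem in a space of functions pointwise dominated by the heat kernel based at $(y_0,t+\gamma)$. Introducing a constant $M>0$ to be fixed later, I would work in the complete metric space
\[
X_M:=\{u:G\times[0,+\infty)\to[0,+\infty) \,:\, u(x,t)\le M\,p(x,y_0,t+\gamma)\text{ for all }x\in G,\,t\ge 0\}
\]
endowed with the weighted distance
\[
d(u,v):=\sup_{x\in G,\,t\ge 0}\frac{|u(x,t)-v(x,t)|}{p(x,y_0,t+\gamma)},
\]
and I would consider the Duhamel operator
\[
\Phi[u](x,t):=\sum_{y\in G}p(x,y,t)u_0(y)\mu(y)+\int_0^t\sum_{y\in G}p(x,y,t-s)\,h(s)\,u(y,s)^q\,\mu(y)\,ds,
\]
whose fixed points are exactly mild solutions to \eqref{problema}.

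The core computation is to bound each of the two pieces of $\Phi[u]$ by a multiple of the weight $p(x,y_0,t+\gamma)$. For the initial-data term, assumption \eqref{eq12a} combined with the semigroup identity \eqref{eq24} gives
\[
\sum_{y\in G}p(x,y,t)u_0(y)\mu(y)\le \epsilon\sum_{y\in G}p(x,y,t)p(y,y_0,\gamma)\mu(y)=\epsilon\,p(x,y_0,t+\gamma).
\]
For the nonlinear term, I would invoke Proposition \ref{prop2bis} with $\underline t=\gamma$ to produce a constant $\underline C>0$ satisfying $p(y,y_0,s+\gamma)^{q-1}\le \underline C^{q-1}e^{-(q-1)\lambda_1(G)(s+\gamma)}$ for every $s\ge 0$; then for $u\in X_M$,
\[
u(y,s)^q\le M^q \underline C^{q-1}e^{-(q-1)\lambda_1(G)(s+\gamma)}\,p(y,y_0,s+\gamma),
\]
and one further application of \eqref{eq24} absorbs the $y$-sum into $p(x,y_0,t+\gamma)$, yielding
\[
\int_0^t\!\sum_{y\in G}p(x,y,t-s)h(s)u(y,s)^q\mu(y)\,ds\le M^q \underline C^{q-1}e^{-(q-1)\lambda_1(G)\gamma}\,\tilde H\,p(x,y_0,t+\gamma).
\]
Hence $\Phi[u](x,t)\le \bigl(\epsilon+M^q \underline C^{q-1}e^{-(q-1)\lambda_1(G)\gamma}\tilde H\bigr)\,p(x,y_0,t+\gamma)$.

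Exactly the same reasoning, combined with the elementary inequality $|a^q-b^q|\le q\max(a,b)^{q-1}|a-b|$ for $a,b\ge 0$, gives the contraction estimate
\[
d(\Phi[u],\Phi[v])\le qM^{q-1}\underline C^{q-1}e^{-(q-1)\lambda_1(G)\gamma}\,\tilde H\,d(u,v).
\]
Setting $M:=2\epsilon$, the self-map condition $\epsilon+M^q \underline C^{q-1}e^{-(q-1)\lambda_1(G)\gamma}\tilde H\le M$ and the contraction condition $qM^{q-1}\underline C^{q-1}e^{-(q-1)\lambda_1(G)\gamma}\tilde H<1$ both reduce to the smallness of a positive power of $\epsilon$, which holds for $\epsilon>0$ sufficiently small precisely because $q>1$. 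The Banach fixed point theorem then produces $u\in X_M$ satisfying \eqref{solmild} on all of $[0,+\infty)$, and membership in $X_M$ is exactly the bound \eqref{eq305}.

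The main subtlety is the choice of weight. A purely spatially-constant weight of the form $e^{-\lambda_1(G)t}$ (as suggested directly by Proposition \ref{prop2bis}) is incompatible with Duhamel, because the sum $\sum_y p(x,y,t-s)$ does not reproduce a useful $x$-profile by itself. The weight $p(x,y_0,t+\gamma)$ dictated by \eqref{eq12a} is instead tailor-made so that \eqref{eq24} can be invoked twice to preserve the correct $x$-dependence, while the spectral decay of Proposition \ref{prop2bis} applied only to the surplus $(q-1)$-th power converts the $s$-integral into the finite quantity $\tilde H$ of \eqref{eq306}.
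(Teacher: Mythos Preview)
Your argument is correct and follows essentially the same route as the paper: the same weighted metric space built from $p(x,y_0,t+\gamma)$, the same Duhamel map, the same two uses of the semigroup identity \eqref{eq24}, and the same appeal to Proposition \ref{prop2bis} on the surplus $(q-1)$-th power to produce the factor $\tilde H$. The only cosmetic differences are that the paper introduces an auxiliary constant $\delta$ with $M\underline C\le\delta$ (and discards the factor $e^{-(q-1)\lambda_1(G)\gamma}$ you retain), whereas you take $M=2\epsilon$ directly; both lead to the same smallness conditions on $\epsilon$.
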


\begin{remark}
Let
\[h(t)=e^{\alpha t}, \quad t\geq 0\,.\]
From Theorems \ref{teo1} and \ref{teo2} we deduce that if $\alpha>\lambda_1(G)(q-1)$, then any solution to problem \eqref{problema} blows up in finite time (provided that $u_0\not \equiv 0)$.
On the other hand, if $\alpha<\lambda_1(G)(q-1)$, then a global in time solution exists, provided that $u_0$ is small enough.

It remains to understand what happens in the critical case $\alpha=\lambda_1(q-1).$
\end{remark}

\begin{remark}
Let $h(t)\equiv 1$. From Theorem \ref{teo2} we have that for any $q>1$ there exists a global solution, provided that $u_0$ is small enough. This is in accordance with the results in \cite{GMeP5}.
\end{remark}

\section{Finite time blow-up for any initial datum}\label{blowup}

We deal here with the blowup analysis. We start with some technical tools.

\subsection{Two key estimates}

Let us first prove a preliminary lemma.

\begin{lemma}\label{lemma1}
Let $(G,\omega,\mu)$ be a weighted infinite graph with $\lambda_1(G)>0$. Suppose that $u_0\in \mathcal F, u_0\geq 0$, $u_0(x_0)>0$ for some $x_0\in G.$
Let $\varepsilon\in(0,\lambda_1(G))$. 
Then there exists $t_0=t_0(x_0,\varepsilon)>0$ such that
\begin{equation}\label{eq31}
(e^{t\Delta}u_0)(x_0) \ge \, C_1 e^{-[\lambda_1(G)+\varepsilon]t}\,, \quad \text{for any}\,\,\, t>t_0\,,
\end{equation}
where $C_1:=u_0(x_0) \mu(x_0)\,.$
\end{lemma}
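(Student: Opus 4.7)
The plan is to get the lower bound by simply retaining a single term in the heat semigroup sum and then invoking the asymptotic lower bound on the diagonal of the heat kernel supplied by Proposition \ref{prop2}.

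First I would write out the definition of the heat semigroup,
\[
(e^{t\Delta} u_0)(x_0) \;=\; \sum_{y\in G} p(x_0,y,t)\, u_0(y)\,\mu(y).
\]
Since $u_0\geq 0$ in $G$ and $p>0$ on $G\times G\times(0,+\infty)$, every term of the series is nonnegative, so I may drop all of them except the one corresponding to $y=x_0$. This yields
\[
(e^{t\Delta} u_0)(x_0) \;\geq\; p(x_0,x_0,t)\, u_0(x_0)\,\mu(x_0) \;=\; C_1\, p(x_0,x_0,t),
\]
with $C_1=u_0(x_0)\mu(x_0)>0$ by the hypothesis that $u_0(x_0)>0$.

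Next, I would exploit Proposition \ref{prop2} with $x=y=x_0$: the limit
\[
\lim_{t\to+\infty} \frac{\log p(x_0,x_0,t)}{t}=-\lambda_1(G)
\]
implies that, for the chosen $\varepsilon\in(0,\lambda_1(G))$, there exists $t_0=t_0(x_0,\varepsilon)>0$ such that
\[
\frac{\log p(x_0,x_0,t)}{t}\;>\; -[\lambda_1(G)+\varepsilon]\qquad \text{for every } t>t_0,
\]
or equivalently $p(x_0,x_0,t)> e^{-[\lambda_1(G)+\varepsilon]t}$ for $t>t_0$. Inserting this into the previous inequality yields \eqref{eq31}.

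I do not foresee any serious obstacle: the only minor point is checking that dropping all terms except $y=x_0$ is legitimate, which only requires the nonnegativity of the summands (granted by $u_0\geq 0$ and the positivity of $p$), and recognising that $\varepsilon$ is chosen strictly less than $\lambda_1(G)$ solely so that the exponent $-[\lambda_1(G)+\varepsilon]$ strictly dominates the true decay rate, making the lower bound eventually valid while avoiding any degenerate choice.
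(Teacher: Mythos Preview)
Your proof is correct and follows essentially the same approach as the paper: both arguments retain only the diagonal term $y=x_0$ in the heat semigroup sum (using nonnegativity of $u_0$ and of $p$) and then invoke Proposition~\ref{prop2} to obtain $p(x_0,x_0,t)\ge e^{-[\lambda_1(G)+\varepsilon]t}$ for large $t$. The only difference is the order of presentation.
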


\begin{proof}
Let $x_0 \in G$. From \eqref{eq26} if follows that there exists $t_0>0$ such that
$$
p(x_0,x_0,t)\ge e^{-[\lambda_1(G)+\varepsilon]t} \quad \text{for every}\, t>t_0\,.
$$
Hence
$$
\begin{aligned}
(e^{t\Delta}u_0)(x) &=\sum_{y\in G} p(x_0,y,t)u_0(y)\mu(y)\\
&\ge p(x_0,x_0,t) u_0(x_0)\, \mu(x_0) \\
&\ge e^{-[\lambda_1(G)+\varepsilon]t}\, u_0(x_0) \mu(x_0).
\end{aligned}
$$
Consequently, we obtain \eqref{eq31} with $C_1:=u_0(x_0) \mu(x_0)>0\,.$
\end{proof}

Let $u$ be a solution of equation \eqref{problema}. Then, for any $x\in G$ and for any $T>0$, we define
\begin{equation}\label{eq32}
\Phi_x^T(t)\equiv \Phi_x(t):=\sum_{z\in G} p(x,z,T-t)\, u(z,t)\, \mu(z)\,\quad \text{ for any }\,\, t\in [0, T]\,.
\end{equation}
Observe that
\begin{equation}\label{eq33}
\Phi_x(0)=\sum_{z\in G} p(x,z,T) \,u_0(z)\,\mu(z)\,= (e^{T\Delta}u_0)(x), \,\, x\in G\,.
\end{equation}
We now state the following lemma.

\begin{lemma}\label{lemma2}
Let $(G,\omega,\mu),\, q,\, h,\, u_0,\, H$ be as in Theorem \ref{teo1}. Let $x\in G$ and $\Phi_x^T(t)$ be as in \eqref{eq32}. Then
\begin{equation}\label{eq34}
[\Phi_x^T(0)]^{q-1}\,\,\leq\,\, \frac{1}{(q-1)H(T)}\,.
\end{equation}
\end{lemma}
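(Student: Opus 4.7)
The plan is to derive a Bernoulli-type differential inequality for $\Phi_x^T(t)$ and then integrate it from $0$ to $T$. First, I would substitute the mild formulation \eqref{solmild} for $u(z,t)$ into the definition \eqref{eq32} of $\Phi_x^T(t)$, interchange the (non-negative) sums and integrals by Tonelli, and apply the semigroup identity \eqref{eq24} with the time splittings $(T-t)+t=T$ and $(T-t)+(t-s)=T-s$. This collapses the double sums over the intermediate vertex $z$ and yields the convenient representation
\begin{equation*}
\Phi_x^T(t) = \Phi_x^T(0) + \int_0^t h(s) \sum_{y \in G} p(x,y,T-s)\, u(y,s)^q\, \mu(y)\, ds.
\end{equation*}

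Differentiating in $t$ then gives $(\Phi_x^T)'(t) = h(t)\sum_{y\in G} p(x,y,T-t)\, u(y,t)^q\,\mu(y)$. The solution $u$ is non-negative, since $u_0 \geq 0$ and $h \geq 0$ propagate through the mild formula, and by stochastic completeness of $G$ the weights $p(x,\cdot,T-t)\mu(\cdot)$ form a probability measure on $G$. Applying Jensen's inequality to the convex function $r \mapsto r^q$ then yields
\begin{equation*}
\sum_{y\in G} p(x,y,T-t)\, u(y,t)^q\,\mu(y) \ \geq\ [\Phi_x^T(t)]^q,
\end{equation*}
so $\Phi_x^T$ satisfies the Bernoulli-type inequality $(\Phi_x^T)'(t) \geq h(t)[\Phi_x^T(t)]^q$ on $[0,T)$.

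Finally, by the positivity-improving property (Proposition \ref{prop:positivity-improving}) and the assumption $u_0 \not\equiv 0$, we have $\Phi_x^T(t)>0$ on $[0,T)$, so one can divide through by $[\Phi_x^T(t)]^q$ and integrate from $0$ to any $t^\ast < T$, obtaining
\begin{equation*}
\frac{1}{q-1}\bigl([\Phi_x^T(0)]^{-(q-1)} - [\Phi_x^T(t^\ast)]^{-(q-1)}\bigr) \geq H(t^\ast).
\end{equation*}
Dropping the positive term $[\Phi_x^T(t^\ast)]^{-(q-1)}$ and letting $t^\ast \to T^-$ by monotone convergence of $H$ gives $[\Phi_x^T(0)]^{-(q-1)} \geq (q-1)H(T)$, which is exactly \eqref{eq34}. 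The only delicate points, rather than outright obstacles, are the swaps of infinite sums and time integrals in the first step (routine given non-negativity of all terms and the $\ell^\infty$-boundedness of $u$) and the verification that $\Phi_x^T(t)$ stays strictly positive so that the separation of variables is legal.
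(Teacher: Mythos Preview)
Your proposal is correct and follows essentially the same route as the paper: substitute the mild formulation into $\Phi_x^T$, collapse via the semigroup identity to obtain $\Phi_x^T(t)=\Phi_x^T(0)+\int_0^t h(s)\sum_y p(x,y,T-s)u(y,s)^q\mu(y)\,ds$, differentiate, apply Jensen to get $(\Phi_x^T)'\ge h\,(\Phi_x^T)^q$, use positivity improving to divide, and integrate. The only cosmetic differences are that the paper integrates directly on $[0,T]$ rather than passing to the limit $t^\ast\to T^-$, and that it invokes Jensen without isolating stochastic completeness (which is in any case part of the hypotheses of Theorem~\ref{teo1}).
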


\begin{proof}
Let $u$ be a solution to problem \eqref{problema}, hence
\begin{equation}\label{eq30f}
u(z,t)=\sum_{y\in G} p(z,y,t) u_0(y)\,\mu(y) + \int_0^t\,\sum_{y\in G}\,p(z,y,t-s)\,h(s)\,u(y,s)^q\,\mu(y)\, ds\,.
\end{equation}

We multiply the equality by $p(x,z,T-t)$ and sum over $z\in G$. Therefore, we get
\begin{equation}\label{eq35}
\begin{aligned}
\sum_{z\in G} p(x,z,T-t)\,u(z,t)\,\mu(z) &=\sum_{z\in G}\sum_{y\in G} p(z,y,t)\,u_0(y)\,p(x,z,T-t)\,\mu(z)\mu(y)\\
&+ \int_0^t\sum_{z\in G}\sum_{y\in G} p(z,y,t-s)\,p(x,z,T-t)\,\mu(z)\,h(s)\,u(y,s)^q\mu(y)\,ds.
\end{aligned}
\end{equation}
Now, due to \eqref{eq32}, for all $t\in (0, T),$ equality \eqref{eq35} reads
$$
\begin{aligned}
\Phi_x(t)&=\sum_{z\in G}\sum_{y\in G} p(z,y,t)\,u_0(y)\,p(x,z,T-t)\,\mu(z)\mu(y) \\
&+ \int_0^t\sum_{z\in G}\sum_{y\in G} p(z,y,t-s)\,p(x,z,T-t)\,\mu(z)\,h(s)\,u(y,s)^q\,\mu(y)\,ds.
\end{aligned}
$$
By \eqref{eq33}, for all $t\in (0, T),$
\begin{equation*}
\begin{aligned}
\Phi_x(t)&= \sum_{y\in G} p(x,y,T)\,u_0(y)\mu(y)+\int_0^t\sum_{y\in G}\,p(x,y,T-s)\,h(s)\,u(y,s)^q\,\mu(y)\,ds \\
&= \Phi_x(0)+\int_0^t\sum_{y\in G}\,p(x,y,T-s)\,h(s)\,u(y,s)^q\,\mu(y)\,ds\,.
\end{aligned}
\end{equation*}
Differentiating at both sides, we get
\begin{equation*}
    \Phi_x'(t)=\sum_{y\in G}\,p(x, y, T-t)\,u(y,t)^q\,\mu(y)\,h(t)\,.
\end{equation*}
Clearly, the map $s\mapsto s^q$ is convex, therefore the Jensen inequality may be used on the
 right hand side, yielding
\begin{equation*}
    \Phi_x'(t)\geq h(t)\,\left(\sum_{y_\in G}\,p(x, y, T-t)\,u(y, t)\,\mu(y)\right)^q=h(t)\,[\Phi_x(t)]^q\,.
\end{equation*}
From Proposition \ref{prop:positivity-improving}, we have that $\Phi_x(t)>0$ for all $t\in[0, T]$, then the inequality can be divided by $[\Phi_x(t)]^q$ and integrated in $[0, T]$. The following estimate is the result of the integration
\begin{equation*}
    (q-1)H(T)\leq\frac{1}{[\Phi_x(0)]^{q-1}}-\frac{1}{[\Phi_x(T)]^{q-1}}\,.
\end{equation*}
 The thesis follows immediately.
\end{proof}

\subsection{Proof of Theorem \ref{teo1}}
\begin{proof}[Proof of Theorem \ref{teo1}]
Suppose, by contradiction, that $u$ is a global solution of problem \eqref{problema} and let $\epsilon$ be any value in $(0, \lambda_1(G))$.  Let $x_0\in G$. By Lemma \ref{lemma1} and Lemma \ref{lemma2},
$$
C_1 \, e^{-[\lambda_1(G)+\varepsilon]\,T}\,\,\le\,\,(e^{T\Delta}u_0)(x)= \Phi_x(0)\,\,\le\,\, \left(\frac{1}{q-1}\right)^{\frac{1}{q-1}}\,H(T)^{-\frac{1}{q-1}}, \quad \text{for any}\,\,\, T>t_0,
$$
where $t_0>0$, $C_1>0$ are given in Lemma \ref{lemma1}. Hence, if $u$ exists globally in time, we would have
\begin{equation}\label{eq326}
\frac{H(T)^{\frac{1}{q-1}}}{e^{[\epsilon+\lambda_1(G)]T}}\le\,\, \frac{1}{C_1}\left(\frac{1}{q-1}\right)^{\frac{1}{q-1}} \quad \text{for any }\, T>t_0.
\end{equation}
Nonetheless, due to \eqref{eq:limite-non-esistenza}, the left hand side of \eqref{eq326} tends to $+\infty$ as $T\to \infty$.
Thus, we have a contradiction. Hence the thesis follows.
\end{proof}

\section{Local existence}\label{local-existence}
The proof for the local existence of a solution for problem \eqref{problema} relies on an application of the Banach-Caccioppoli theorem in the complete metric space:
\[\mathcal B_T:=\{u\in \mathfrak F\,:\, 0\leq u(x,t) \leq M\quad \forall\, x\in X, t\in (0, T)\}\qquad (T>0),\]
endowed with the distance
\[d_\infty(u, v):=\sup_{G\times(0, T)}|u-v|\,.\]

We define the map $\Psi:\mathcal{B}_T\rightarrow\mathcal{B}_T$ as
\begin{equation}\label{eq:psi-local}
(\Psi u)(x,t):=\sum_{z\in G}p(x,z,t)u_0(z)\mu(z)+\int_0^t\sum_{z\in G}p(x,z,t-s)\,h(s)u(z,s)^q\,\mu(z)\,ds\,.
\end{equation}

We first prove two key estimates that ensure that the map $\Psi$ is a contraction map in the metric space $(\mathcal{B}_T,d_\infty)$.

\begin{lemma}
    Let $u_0\in\ell^\infty(G), u_0\geq 0$. Suppose that $M>\|u_0\|_\infty.$
    Then, for any $u\in\mathcal{B}_T$,
    \begin{equation*}
        \Psi(u)\in\mathcal{B}_T\,,
    \end{equation*}
    \label{lemma:psi-lemma-1-local}
provided that $T>0$ is small enough.
\end{lemma}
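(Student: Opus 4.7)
The plan is to verify the two defining properties of $\mathcal{B}_T$ separately: nonnegativity of $\Psi(u)$ and the uniform bound $\Psi(u)\le M$. Nonnegativity is immediate because every factor in the expression \eqref{eq:psi-local} defining $(\Psi u)(x,t)$ is nonnegative: the heat kernel $p$ is positive, $u_0\geq 0$, $h\geq 0$, and $u\geq 0$ since $u\in\mathcal{B}_T$.

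For the upper bound I would split $(\Psi u)$ into the two natural pieces. Using \eqref{eq23} I would estimate the linear (semigroup) term by
\begin{equation*}
\sum_{z\in G}p(x,z,t)\,u_0(z)\,\mu(z)\,\leq\,\|u_0\|_{\ell^\infty(G)}\sum_{z\in G}p(x,z,t)\,\mu(z)\,\leq\,\|u_0\|_{\ell^\infty(G)}\,.
\end{equation*}
For the Duhamel term I would use the bound $u(z,s)\leq M$ together with \eqref{eq23} again to obtain, for every $(x,t)\in G\times(0,T)$,
\begin{equation*}
\int_0^t\sum_{z\in G}p(x,z,t-s)\,h(s)\,u(z,s)^q\,\mu(z)\,ds\,\leq\,M^q\int_0^t h(s)\,ds\,=\,M^q\,H(t)\,.
\end{equation*}
Combining the two gives $(\Psi u)(x,t)\leq \|u_0\|_{\ell^\infty(G)}+M^q H(t)$.

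The only point to arrange is that the right hand side is at most $M$. Since $h$ is locally integrable on $[0,+\infty)$, the function $H$ is continuous and $H(0)=0$, so $M^q H(T)\to 0$ as $T\to 0^+$. Because $M>\|u_0\|_{\ell^\infty(G)}$ by hypothesis, we may choose $T>0$ so small that
\begin{equation*}
M^q\,H(T)\,\leq\,M-\|u_0\|_{\ell^\infty(G)}\,,
\end{equation*}
and then $(\Psi u)(x,t)\leq M$ for every $(x,t)\in G\times(0,T)$, which completes the proof.

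There is no real obstacle here; the argument is just a matter of bookkeeping. The only subtle point is that no smallness of $u_0$ or of $M$ relative to the data is used—rather, it is the smallness of $T$ that absorbs the nonlinear contribution, which is exactly why the statement is local in time and does not require $\lambda_1(G)>0$.
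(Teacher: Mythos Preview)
Your proof is correct and follows essentially the same approach as the paper: bound the semigroup term by $\|u_0\|_\infty$ via \eqref{eq23}, bound the Duhamel term by $M^q\int_0^T h(s)\,ds$ using $u\le M$ and \eqref{eq23}, and then choose $T$ small enough that $\|u_0\|_\infty+M^q\int_0^T h(s)\,ds\le M$. Your write-up is slightly more explicit about why $H(T)\to 0$, but the argument is the same.
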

\begin{proof}
   From \eqref{eq:psi-local}, \eqref{eq23} and the very definition of $\mathcal B_T$, we get
    \begin{align*}
        0\leq (\Psi u)(x, t) &= \sum_{y\in X}\,p(x, y, t)u_0(y)\mu(y)\,+ \\
        &+\int_0^t\,\sum_{y\in X}\,p(x, y, t-s) h(s)u(y, s)^{q}\mu(y)\,ds \nonumber \\
        &\leq \|u_0\|_\infty + M^q \int_0^T h(s) ds\,,\nonumber
    \end{align*}
    for any $x\in G$, $t\in (0,T)$. Since $M>\|u_0\|_\infty,$ we can select $T>0$ so small that
    \begin{equation}\label{eq330}
    \|u_0\|_\infty + M^q \int_0^T h(s) ds \leq M\,.
    \end{equation}
    Thus $\Psi(u)\in\mathcal{B}_T$.
\end{proof}

\begin{proposition}\label{lemma:psi-lemma-2-local}
Let the assumptions of Theorem \ref{theo:local-existence} be satisfied, and suppose that $M>\|u_0\|_\infty.$
Then, for any $u,v\in\mathcal{B}_T$,
\begin{equation}\label{eq55aa}
d_\infty(\Psi(u), \Psi(v))\leq\, q\,M^{q-1}\int_0^{T}\,h(t)dt\,d_\infty(u, v)\,.
\end{equation}
\end{proposition}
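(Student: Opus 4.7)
The plan is to estimate $\Psi(u)-\Psi(v)$ pointwise and then pass to the supremum. The initial-datum term in the definition \eqref{eq:psi-local} of $\Psi$ does not depend on $u$, so it cancels when we subtract, leaving
\[
(\Psi u)(x,t)-(\Psi v)(x,t)=\int_0^t\sum_{z\in G}p(x,z,t-s)\,h(s)\bigl[u(z,s)^q-v(z,s)^q\bigr]\mu(z)\,ds\,.
\]
Because $h\geq 0$ and $p\geq 0$, I can pull absolute values inside the sum and integral and bound the difference by the integral of $h(s)\,|u(z,s)^q-v(z,s)^q|$ weighted by $p(x,z,t-s)\mu(z)$.

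The key pointwise inequality is the elementary one-variable estimate: for $a,b\in[0,M]$ one has $|a^q-b^q|\le q\,M^{q-1}\,|a-b|$, obtained by applying the mean value theorem to the map $s\mapsto s^q$ on $[0,M]$. Since both $u(z,s)$ and $v(z,s)$ lie in $[0,M]$ (by the definition of $\mathcal{B}_T$), this yields
\[
|u(z,s)^q-v(z,s)^q|\le q\,M^{q-1}\,|u(z,s)-v(z,s)|\le q\,M^{q-1}\,d_\infty(u,v)\,.
\]

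Substituting this bound in and using \eqref{eq23} to control $\sum_{z\in G}p(x,z,t-s)\mu(z)\le 1$, I obtain
\[
|(\Psi u)(x,t)-(\Psi v)(x,t)|\le q\,M^{q-1}\,d_\infty(u,v)\int_0^t h(s)\,ds\le q\,M^{q-1}\,d_\infty(u,v)\int_0^T h(s)\,ds\,,
\]
for every $x\in G$ and $t\in(0,T)$. Taking the supremum over $G\times(0,T)$ on the left gives exactly \eqref{eq55aa}. There is no real obstacle here: the proof is a direct application of the mean value inequality and the sub-Markov bound \eqref{eq23}; the only point requiring (mild) care is ensuring we exploit $u,v\in\mathcal{B}_T$ to stay within the interval $[0,M]$ where the uniform Lipschitz constant $qM^{q-1}$ of $s\mapsto s^q$ is valid.
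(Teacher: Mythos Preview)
Your proof is correct and follows essentially the same approach as the paper: both subtract to cancel the linear term, apply the mean value (Lagrange) theorem to bound $|u^q-v^q|\le qM^{q-1}|u-v|$ using $u,v\in\mathcal{B}_T$, invoke the sub-Markov bound \eqref{eq23}, and take the supremum. The only cosmetic difference is that the paper also remarks that $\Psi(u),\Psi(v)\in\mathcal{B}_T$ via Lemma~\ref{lemma:psi-lemma-1-local}, but this is not needed for the inequality \eqref{eq55aa} itself.
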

\begin{proof}Let $u, v\in \mathcal{B}_T$.
Clearly, by Lemma \ref{lemma:psi-lemma-1-local}, $\Psi (u), \Psi(v)\in \mathcal{B}_T$.
    Consider now the following estimate for the quantity $|(\Psi u - \Psi v)(x, t)|$, for any $x\in G$, $t\in[0, T]$.
    \begin{equation}
        |(\Psi u - \Psi v)(x, t)|\leq \int_0^t\,\sum_{y\in X}\,p(x, y, t-s)\,h(s)|u(y,s)^q-v(y,s)^q|\mu(y)\,ds  .
        \label{eq310}
    \end{equation}
    Using the Lagrange theorem, the quantity $|u(y, s)^q-v(y, s)^q|$ can be controlled as follows. Let $y\in G$, $s\in(0, t)$; then there exists  $\xi$ in between  $u(y, s)$ and $v(y, s)$ such that
    \begin{equation*}
        |u(y, s)^q-v(y, s)^q| = q\,\xi^{q-1}|u(y, s)-v(y, s)|\,.
    \end{equation*}
    Since $u, v\in \mathcal{B}_T$, we have
    \begin{align*}
        |u(y, s)^q-v(y, s)^q|&\leq q\,\max\{u(y,s), v(y, s)\}^{q-1}|u(y, s)-v(y, s)|\\
        &\leq q\,M^{q-1} |u(y, s)-v(y, s)|\,.
    \end{align*}
    Combining the preceeding inequality  with \eqref{eq310} and \eqref{eq23}, we obtain
    \begin{align*}
        &|(\Psi u - \Psi v)(x, t)| \leq
        q\,M^{q-1}\int_0^t\,\sum_{y\in X}\,p(x, y, t-s)\, h(s)|u(y, s)-v(y, s)|\mu(y)\,ds \\
                &\leq d_\infty(u,v)\,q\,M^{q-1} \int_0^t\,\sum_{y\in X}\,p(x, y, t-s)\mu(y) h(s)\,ds \\
        &\leq d_\infty(u, v)\,q\,M^{q-1} \,\int_0^T h(s)\,ds \quad \text{ for all } x\in G, t\in (0,T).
    \end{align*}
    Thus \eqref{eq55aa} follows.
    \end{proof}

\begin{proof}[Proof of Theorem \ref{teo2}]

Let $M>\|u_0\|_\infty$. We can choose $T>0$ so small that \eqref{eq330} holds and
\begin{equation}\label{eq301}
q\,M^{q-1}\int_0^{T}\,h(t)dt<1\,.
\end{equation}
Therefore, in view of Proposition \ref{lemma:psi-lemma-2-local}, the map $\Psi:\mathcal{B}_T\rightarrow\mathcal{B}_T$ is a contraction map on a complete metric space. Hence by the Banach-Cacioppoli theorem, there exists a unique fixed point, that is a function $u\in\mathcal{B}_T$ satisfying
    \begin{equation*}
        u(x, t) = \sum_{y\in X}\,p(x, y, t)u_0(y)\mu(y) + \int_0^t\,\sum_{y\in X}\,p(x, y, t-s)\,h(s)u(y, s)^q\,ds\quad x\in G, t\in [0, T]\,.
    \end{equation*}
 Such $u$ is a mild solution of problem \eqref{problema}. Furthermore, since $u\in \mathcal B_T$, $u$ is bounded in $G\times (0, T).$
\end{proof}

\section{Global existence}\label{existence}
Also the proof of global existence relies on the contraction mapping principle. However, in this case we have to change properly the metric space in which the map is defined. It is introduced in the following definition and it is related to the heat kernel $p$.

\begin{definition}\label{funcspace}
Let $M>0, \gamma>0$. Let $y_0\in G$ be a point such that $u_0(y_0)>0$. We denote by $\mathcal{M}(M, \gamma, y_0)\equiv \mathcal M$ the set of all non-negative functions $u: G\times[0,+\infty)\to \mathbb R$ such that $t\mapsto u(x, t)$ is continuous for each $x\in G$, and \eqref{eq305} is fulfilled.
The distance $d:\mathcal{M}\times\mathcal{M}\rightarrow[0, +\infty)$ between any two functions $u, v\in\mathcal{M}$ is defined as
\begin{equation}\label{eq52}
d(u, v):=\sup_{x\in G, t>0}\frac{|(u-v)(x,t)|}{p(x,y_0,t+\gamma)}\,.
\end{equation}
\end{definition}

It can be easily proven that the space $(\mathcal{M}, d)$ is a complete metric space. We assume also \eqref{eq12a} for this section,
where $\epsilon$ is a suitable positive constant.
 We define the map $\Psi:\mathcal{M}\rightarrow\mathcal{M}$ as
\begin{equation}\label{eq:psi}
(\Psi u)(x,t):=\sum_{z\in G}p(x,z,t)u_0(z)\mu(z)+\int_0^t\sum_{z\in G}p(x,z,t-s)\,h(s)u(z,s)^q\,\mu(z)\,ds\,.
\end{equation}

To begin with, we show that $\Psi:\mathcal M\to \mathcal M$ is well-defined.

\begin{lemma}
    Suppose that  $u_0\in\ell^\infty(G)$ and that condition \eqref{eq12a} holds. Let $\delta>0$ and $M>0$  be such that
    \begin{equation}\label{eq302}
        \|u_0\|_{\ell^\infty(G)}<\delta\,,
    \end{equation}
    and
    \begin{equation}
        M\leq\frac{\delta}{\underline{C}}\,,
        \label{eq:lemma-psi-1-M}
    \end{equation}
    where $\underline C$ is given by \eqref{e301}. Suppose that \eqref{eq306} is satisfied; moreover,
        \begin{equation}
        \delta^{q-1}\,\tilde{H}<1\,,
        \label{eq:lemma-psi-1-delta}
    \end{equation}
    and
    \begin{equation}
        0<\epsilon<M\,(1-\delta^{q-1}\,\tilde{H})\,.
        \label{eq:lemma-psi-1-epsilon}
    \end{equation}
    Then, for any $v\in\mathcal{M}$, we have
    \begin{equation*}
        \Psi(v)\in\mathcal{M\,.}
    \end{equation*}
    \label{lemma:psi-lemma-1}
\end{lemma}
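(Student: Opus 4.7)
The plan is to verify, for arbitrary $v \in \mathcal{M}$, the three defining properties of $\mathcal{M}$ for $\Psi(v)$: non-negativity, continuity of $t \mapsto (\Psi v)(x,t)$ for each $x \in G$, and the pointwise bound $(\Psi v)(x,t) \le M\, p(x, y_0, t+\gamma)$. Non-negativity is immediate from the positivity of $p$, $u_0$, $h$, and $v$. Continuity in $t$ follows by standard arguments: the first summand equals $(e^{t\Delta} u_0)(x)$, which is continuous in $t$ since $u_0 \in \ell^\infty(G)$ and $t \mapsto p(x,z,t)$ is continuous; the integral is continuous in its upper limit once the integrand is controlled as in the pointwise estimate below. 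The core of the proof is this pointwise estimate.

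For the first summand, hypothesis \eqref{eq12a} gives $u_0(z) \le \epsilon\, p(z, y_0, \gamma)$, whence the semigroup identity \eqref{eq24} yields
\[
\sum_{z \in G} p(x,z,t)\, u_0(z)\, \mu(z) \;\le\; \epsilon \sum_{z \in G} p(x,z,t)\, p(z, y_0, \gamma)\, \mu(z) \;=\; \epsilon\, p(x, y_0, t+\gamma).
\]
For the integral summand, using $v \in \mathcal{M}$, we have $v(z,s)^q \le M^q\, p(z, y_0, s+\gamma)^q$. To linearize this $q$-th power so that \eqref{eq24} can be reapplied, I invoke Proposition \ref{prop2bis} with $\underline{t} = \gamma$: for every $s \ge 0$, $p(z, y_0, s+\gamma) \le \underline{C}\, e^{-\lambda_1(G)(s+\gamma)}$, hence
\[
p(z, y_0, s+\gamma)^q \;\le\; \underline{C}^{q-1}\, e^{-\lambda_1(G)(q-1)(s+\gamma)}\, p(z, y_0, s+\gamma).
\]
A second application of \eqref{eq24} then collapses the $z$-sum into $p(x, y_0, t+\gamma)$. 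Integrating against $h(s)$ on $(0,t)$, dropping the harmless factor $e^{-\lambda_1(G)(q-1)\gamma} \le 1$, and recognizing $\tilde{H}$ from \eqref{eq306} gives the bound $M^q\, \underline{C}^{q-1}\, \tilde{H}\, p(x, y_0, t+\gamma)$.

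Finally, assumption \eqref{eq:lemma-psi-1-M} gives $(M\underline{C})^{q-1} \le \delta^{q-1}$, so $M^q \underline{C}^{q-1} \le M \delta^{q-1}$, and summing the two contributions yields
\[
(\Psi v)(x,t) \;\le\; \bigl(\epsilon + M \delta^{q-1} \tilde{H}\bigr)\, p(x, y_0, t+\gamma) \;\le\; M\, p(x, y_0, t+\gamma),
\]
the last inequality being precisely \eqref{eq:lemma-psi-1-epsilon}. The main technical point, and the reason assumption \eqref{eq306} enters, is the step of trading one factor of $p(z, y_0, s+\gamma)$ inside $p^q$ for the exponential decay furnished by Proposition \ref{prop2bis}; this is what converts a potentially divergent time integral into the finite constant $\tilde{H}$ and, together with \eqref{eq:lemma-psi-1-delta}, leaves just enough room in the budget $M$ to absorb the $\epsilon$-contribution from the initial datum.
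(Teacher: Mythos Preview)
Your proof is correct and follows essentially the same route as the paper's: bound the initial-datum term via \eqref{eq12a} and the semigroup identity \eqref{eq24}, then in the Duhamel integral peel off $p(z,y_0,s+\gamma)^{q-1}$ using Proposition~\ref{prop2bis}, collapse the remaining $z$-sum via \eqref{eq24} again, and conclude with \eqref{eq:lemma-psi-1-M} and \eqref{eq:lemma-psi-1-epsilon}. The only cosmetic difference is that the paper writes $v^q=\tfrac{v}{p}\cdot p\cdot v^{q-1}$ before bounding, whereas you go directly to $v^q\le M^q p^q$; the resulting estimate $(\epsilon+M\delta^{q-1}\tilde H)\,p(x,y_0,t+\gamma)$ is identical.
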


\begin{proof}
In view of \eqref{eq:psi}, we have, for any $t\geq 0$, $x\in G$,
\begin{equation}\label{eq311}
\begin{aligned}
        0\leq (\Psi u)(x, t) &= \sum_{y\in X}\,p(x, y, t)u_0(y)\mu(y)\,+ \\
        &+\int_0^t\,\sum_{y\in X}\,p(x, y, t-s)\,\frac{u(y, s)}{p(y, y_0, s+\gamma)}\,p(y, y_0, s+\gamma)\,h(s)u(y, s)^{q-1}\mu(y)\,ds  \\
        &\leq \epsilon\,\sum_{y\in X}\,p(x, y, t)\,p(y, y_0, \gamma)\mu(y)  \\
        &+M\,\int_0^t\,\sum_{y\in X}\,p(x, y, t-s)\,p(y, y_0, s+\gamma)\,h(s)u(y, s)^{q-1}\mu(y)\,ds\,,
    \end{aligned}
\end{equation}
    By using the fact that the function $u\in\mathcal{M}$, the estimate \eqref{e301} for the heat kernel, and the hypothesis \eqref{eq:lemma-psi-1-M} on the constant $M$,
    the quantity $u(y, s)^{q-1}$ inside the integral can be estimated as follows
    \begin{equation}\label{dim:parte-2}
        u(y, s)^{q-1}\leq M^{q-1}\,p(y, y_0, s+\gamma)^{q-1}\leq M^{q-1}\,\underline{C}^{q-1}\,e^{-\lambda_1(G)(q-1)t}
        \leq \delta^{q-1} e^{-\lambda_1(G)(q-1)s}\,.
        \end{equation}
    Hence due to \eqref{eq311}, \eqref{dim:parte-2} and \eqref{eq24}, we obtain, for all $x\in G, t>0,$
    \begin{equation*}
        (\Psi u)(x, t) \leq  \epsilon\,p(x, y_0, t+\gamma)+M\,\delta^{q-1}\,p(x, y_0, t+\gamma)\,\int_0^t\,e^{-\lambda_1(G)(q-1)s}h(s)\,ds\,.
    \end{equation*}
    Hence
    \begin{equation*}
        (\Psi u)(x, t) \leq (\epsilon+M\delta^{q-1}{\widetilde H})\,p(x, y_0, t+\gamma) \quad \text{ for all } x\in G, t\geq 0\,.
    \end{equation*}
    Thanks to hypotheses \eqref{eq306} and \eqref{eq:lemma-psi-1-delta}, it is possible to choose the constant $\epsilon$ so that
    \eqref{eq:lemma-psi-1-epsilon} is fulfilled. Therefore, we can infer that $\Psi(u)\in\mathcal{M}$.
\end{proof}

We are left to prove that the map $\Psi$ is in fact a contraction in the space $(\mathcal{M}, d)$.

\begin{proposition}\label{lemma:psi-lemma-2}
Let the assumptions of Theorem \ref{teo2} and Lemma \ref{lemma:psi-lemma-1} be satisfied.
Then, for any $u,v\in\mathcal{M}$,
\begin{equation}\label{eq55}
d(\Psi(u), \Psi(v))\leq\,q\,\delta^{q-1}\,\tilde{H}\,d(u, v)\,.
\end{equation}
In particular, if \begin{equation}\label{eq303}
        q\,\delta^{q-1}\,\tilde{H}<1\,,
    \end{equation}
    then $\Psi:\mathcal{M}\rightarrow\mathcal{M}$ is a contraction map.
\end{proposition}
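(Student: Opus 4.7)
The plan is to mimic the structure of Lemma \ref{lemma:psi-lemma-1}, replacing the elementary bound on $u^q$ there by the mean value bound on $|u^q-v^q|$. Fix $u,v\in\mathcal M$. Using \eqref{eq:psi}, the quantity $|(\Psi u)(x,t)-(\Psi v)(x,t)|$ is dominated by
\begin{equation*}
\int_0^t\sum_{y\in G} p(x,y,t-s)\,h(s)\,|u(y,s)^q-v(y,s)^q|\,\mu(y)\,ds.
\end{equation*}
By the Lagrange mean value theorem applied to $\xi\mapsto\xi^q$, together with the bounds $u,v\le M\,p(\cdot,y_0,\cdot+\gamma)$ inherited from membership in $\mathcal M$,
\begin{equation*}
|u(y,s)^q-v(y,s)^q|\le q\,\max\{u(y,s),v(y,s)\}^{q-1}\,|u(y,s)-v(y,s)|.
\end{equation*}

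Next I would handle the factor $\max\{u,v\}^{q-1}$ exactly as in \eqref{dim:parte-2}: estimate it by $M^{q-1}p(y,y_0,s+\gamma)^{q-1}$, and then use the heat-kernel upper bound \eqref{e301} together with \eqref{eq:lemma-psi-1-M} to obtain
\begin{equation*}
\max\{u(y,s),v(y,s)\}^{q-1}\le \delta^{q-1}\,e^{-\lambda_1(G)(q-1)s}.
\end{equation*}
Simultaneously, I would rewrite the pointwise difference as
\begin{equation*}
|u(y,s)-v(y,s)|=\frac{|u(y,s)-v(y,s)|}{p(y,y_0,s+\gamma)}\,p(y,y_0,s+\gamma)\le d(u,v)\,p(y,y_0,s+\gamma),
\end{equation*}
which isolates the distance $d(u,v)$ and leaves a clean heat-kernel factor.

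Combining these two estimates and invoking the semigroup identity \eqref{eq24} to collapse the convolution
\begin{equation*}
\sum_{y\in G} p(x,y,t-s)\,p(y,y_0,s+\gamma)\,\mu(y)=p(x,y_0,t+\gamma),
\end{equation*}
I obtain
\begin{equation*}
|(\Psi u)(x,t)-(\Psi v)(x,t)|\le q\,\delta^{q-1}\,d(u,v)\,p(x,y_0,t+\gamma)\int_0^t e^{-\lambda_1(G)(q-1)s}h(s)\,ds.
\end{equation*}
Using \eqref{eq306} to bound the time integral by $\tilde H$, dividing both sides by $p(x,y_0,t+\gamma)$ and taking the supremum over $(x,t)\in G\times(0,+\infty)$ yields \eqref{eq55}. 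Under \eqref{eq303}, the resulting constant is strictly less than $1$, so $\Psi$ is a contraction on $(\mathcal M,d)$.

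The main obstacle, as already occurs in Lemma \ref{lemma:psi-lemma-1}, is the bookkeeping that turns one copy of $u^{q-1}$ (from the mean value inequality) into the exponential decay $\delta^{q-1}e^{-\lambda_1(G)(q-1)s}$ needed to reconstruct $\tilde H$, while simultaneously reserving a factor $p(y,y_0,s+\gamma)$ to factor out $d(u,v)$ and to feed into the semigroup identity. Once this algebraic split is arranged, the rest of the argument is a direct adaptation of the existence lemma.
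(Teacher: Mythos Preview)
Your proof is correct and follows essentially the same argument as the paper: mean value inequality on $|u^q-v^q|$, the heat-kernel bound \eqref{e301} with \eqref{eq:lemma-psi-1-M} to convert $\max\{u,v\}^{q-1}$ into $\delta^{q-1}e^{-\lambda_1(G)(q-1)s}$, the rewriting of $|u-v|$ via $p(y,y_0,s+\gamma)$ to extract $d(u,v)$, and the semigroup identity \eqref{eq24} to collapse the convolution. The structure and the key estimates match the paper's proof step by step.
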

\begin{proof}
For any $x\in G$, $t\geq 0$, we have that
    \begin{equation}
        |(\Psi u - \Psi v)(x, t)|\leq \int_0^t\,\sum_{y\in X}\,p(x, y, t-s)\,h(s)|u(y,s)^q-v(y,s)^q|\mu(y)\,ds
        \label{eq:stima-per-contraction-local}
    \end{equation}
    We now control the quantity $|u(y, s)^q-v(y, s)^q|$. Let $y\in G$, $s\in(0, t)$; by the Lagrange theorem there exists $\xi$ in between $u(y, s)$ and $v(y, s)]$ such that
    \begin{equation*}
        |u(y, s)^q-v(y, s)^q| = q\,\xi^{q-1}|u(y, s)-v(y, s)|\,.
    \end{equation*}
   Since $u, v\in \mathcal{M}$, by means of the estimate \eqref{e301} for the heat kernel, we can deduce that
    \begin{align*}
        |u(y, s)^q-v(y, s)^q|&\leq q\,\max\{u(y,s), v(y, s)\}^{q-1}|u(y, s)-v(y, s)|\\
        &\leq q\,M^{q-1}p(x, y_0, s+\gamma)^{q-1}|u(y, s)-v(y, s)|\\
        &\leq q\,M^{q-1}\underline{C}^{q-1}e^{-\lambda_1(q-1)s}|u(y, s)-v(y, s)|\\
        &\leq q\,\delta^{q-1}e^{-\lambda_1(q-1)s}|u(y, s)-v(y, s)|\,.
    \end{align*}
    Combining this estimate with (\ref{eq:stima-per-contraction-local}), and exploiting \eqref{eq24}, we obtain
    \begin{align*}
        &|(\Psi u - \Psi v)(x, t)| \leq
        q\,\delta^{q-1}\int_0^t\,\sum_{y\in X}\,p(x, y, t-s)\,e^{-\lambda_1(q-1)s}h(s)|u(y, s)-v(y, s)|\mu(y)\,ds \\
        &= q\,\delta^{q-1}\int_0^t\,\sum_{y\in X}\,p(x, y, t-s)\,e^{-\lambda_1(q-1)s}h(s)\frac{|u(y, s)-v(y, s)|}{p(y, y_0, s+\gamma)}p(y, y_0, s+\gamma)\mu(y)\,ds \\
        &\leq d(u,v)\,q\,\delta^{q-1}\int_0^t\,\sum_{y\in X}\,p(x, y, t-s)\,p(y, y_0, s+\gamma)\mu(y)\,e^{-\lambda_1(q-1)s}h(s)\,ds \\
        &\leq d(u, v)\,q\,\delta^{q-1}\,p(x, y_0, t+\gamma)\,\int_0^t\,e^{-\lambda_1(q-1)s}h(s)\,ds\,.
    \end{align*}
    Dividing the inequality by $p(x, y_0, t+\gamma)$, we get
    \begin{equation*}
        d(\Psi(u), \Psi(v))\leq d(u, v)\,q\,\delta^{q-1}\int_0^{t}\,e^{-\lambda_0(q-1)s}h(s)\,ds.
    \end{equation*}
Thus
    \begin{equation*}
        \frac{|(\Psi u - \Psi v)(x, t)|}{p(x, y_0, t+\gamma)}\leq q\,\delta^{q-1}\,\widetilde{H}\,d(u, v)\,,
    \end{equation*}
    Therefore, by taking the supremum over $G\times(0, +\infty)$, we have
    \begin{equation*}
        d(\Psi(u), \Psi(v))\leq q\,\delta^{q-1}\,\widetilde{H}\,d(u, v)\,.
    \end{equation*}
\end{proof}

\begin{proof}[Proof of Theorem \ref{teo2}]
 In view of the hypotheses made on $u_0$ and $\tilde H$, we can assume that \eqref{eq302}-\eqref{eq:lemma-psi-1-epsilon} and \eqref{eq305} are fulfilled.
    Therefore the map $\Psi:\mathcal{M}\rightarrow\mathcal{M}$ is a contraction in a complete metric space. Hence by the Banach-Caccioppoli theorem, there exists a unique fixed point, that is a function $u\in\mathcal{M}$ satisfying
    \begin{equation*}
        u(x, t) = \sum_{y\in X}\,p(x, y, t)u_0(y)\mu(y) + \int_0^t\,\sum_{y\in X}\,p(x, y, t-s)\,h(s)u(y, s)^q\,ds\quad x\in X, t\geq 0\,,
    \end{equation*}
 Such $u$ is a mild solution of problem \eqref{problema}. Furthermore, since $u\in \mathcal M$, we can infer that \eqref{eq305} is satisfied.
\end{proof}

{\bf Data availability statement}. There are no data associated with this research.

%
%
\bigskip
\bigskip
\bigskip

%


\end{document}